
\documentclass{daj}

\dajAUTHORdetails{%
  title = {Larger Corner-Free Sets from Better NOF Exactly-$N$ Protocols}, 
  author = {Nati Linial and Adi Shraibman},
  plaintextauthor = {Nati Linial and Adi Shraibman},
    %
    %
  plaintexttitle = {Larger Corner-Free Sets from Better NOF Exactly-N Protocols}, 
    %
    %
    %
   %
  keywords = {Corner-free sets, Communication complexity},
}   

\dajEDITORdetails{%
   year={2021},
   number={19},
   received={2 February 2021},   
   published={4 October 2021},  
   doi={10.19086/da.28933},       
}   

\usepackage{amsmath, amsfonts}
\usepackage{comment}


\newtheorem{theorem}{Theorem}[section]

\newtheorem{claim}[theorem]{Claim}

\newtheorem{conjecture}[theorem]{Conjecture}

\newtheorem{proposition}[theorem]{Proposition}

\newtheorem{protocol}{Protocol}

\newcommand{\QED}{\hfill$\;\;\;\rule[0.1mm]{2mm}{2mm}$}

\newenvironment{proof}{\begin{pproof}}{\QED\end{pproof}~\\}


\newcommand{\rs}{Ruzsa-Szemer\'{e}di}

\newcommand{\trans}{\ensuremath{\mathbb{T}}}

\begin{document}

\begin{frontmatter}[classification=text]

\title{Larger Corner-Free Sets from Better NOF Exactly-$N$ Protocols} 

\author[nati]{Nati Linial\thanks{Supported in part by Grant 659/18 "High dimensional combinatorics" of the Israel Science Foundation.}}
\author[adi]{Adi Shraibman}

\begin{abstract}
A subset of the integer planar grid $[N] \times [N]$ is called \emph{corner-free}
if it contains no triple of the form $(x,y), (x+\delta,y), (x,y+\delta)$. It is known that
such a set has a vanishingly small density, but how large this density can be remains unknown.
The only previous construction, and its variants, were based on Behrend's large subset of 
$[N]$ with no $3$-term arithmetic progression.
Here we provide the first construction of a corner-free set that does not rely on 
a large set of integers with no arithmetic progressions. 
Our approach to the problem is based on the theory of communication complexity.\\
In the $3$-players exactly-$N$ problem the players need to decide whether
$x+y+z=N$ for inputs $x,y,z$ and fixed $N$. This is the first problem considered in the 
multiplayer Number On the Forehead (NOF) model.
Despite the basic nature of this problem, no progress has been made on it
throughout the years. Only recently have explicit
protocols been found for the first time, yet no improvement in complexity has been achieved to date. The
present paper offers the first improved protocol for the exactly-$N$ problem.
\end{abstract}
\end{frontmatter}

\section{Introduction}

Van der Waerden's well known theorem \cite{van1927beweis} states that
for every $r, k$ and every large enough $N$, if the elements of $[N]:=\{1,\ldots,N\}$ 
are colored by $r$ colors, then there must exist a length-$k$ monochromatic arithmetic progression.
Erd\H os and Tur\'an introduced the density version of this theorem. Let $\rho_k(N)$
be the largest density of a subset of $[N]$ without an arithmetic progression of length $k$.
Szemer\'edi's famous theorem \cite{szemeredi1975sets} shows\footnote{Unless otherwise
specified, all asymptotic statements are taken with $N\to\infty$.} that $\rho_k(N) = o(1)$ for every $k\ge 3$.

Extending van der Waerden's theorem, Gallai proved that in every finite coloring of $\mathbb{Z}^2$
some color contains arbitrarily large square subarrays. In search of a density version of Gallai's theorem, 
Erd\H os and Graham asked about the largest density of a subset of the integer grid $[N]\times[N]$
without a {\em corner}, i.e., a triple $(x,y), (x+\delta,y), (x,y+\delta)$ for
some $\delta\neq 0$. Denote this quantity by $\rho^{\angle}_3(N)$.

Ajtai and Szemer\'edi \cite{ajtai1974sets} proved the first {\em corners theorem}, showing that $\rho^{\angle}_3(N) = o(1)$.
This theorem easily yields that $\rho_3(N) = o(1)$, namely, the $k=3$ case of Szemer\'edi's theorem,
due to Roth \cite{roth1953certain}. Later on
Solymosi \cite{Solymosi2003} showed how to derive Ajtai and Szemer\'edi's corners theorem
from the triangle removal lemma \cite{ruzsa1978triple}.

The quantitative aspects of all these results: Szemer\'edi's theorem, the corner theorem,
the $(6,3)$ theorem (e.g., \cite{ruzsa1978triple}) and the triangle removal lemma 
remain unfortunately poorly understood. 
The upper bounds have gradually improved over the years
and the current "world record" of Bloom and Sisask \cite{bloom2020breaking} is
\[\rho_3(N)\le(\log N)^{-1-c}\text{~~for some absolute constant~~} c>0.\]
In contrast, not much has happened
with lower bounds in these problems. Behrend \cite{behrend1946sets} has constructed a large subset of $[N]$ without 
a $3$-term arithmetic progression, whence\footnote{All logarithms in this paper are in base $2$.}
$$
\rho_3(N) \ge 2^{-2\sqrt{2}\sqrt{\log N}+o(\sqrt{\log N})}.
$$
Elkin's modification \cite{elkin2010improved} of Behrend's construction, has
improved only the little-o term.
Behrend's construction also yields the previously best known lower bound on $\rho^{\angle}_3(N)$, viz.

\begin{equation}\label{old_bound}
\rho^{\angle}_3(N) \ge 2^{-2\sqrt{2}\sqrt{\log N}+o(\sqrt{\log N})}= 2^{-2.828...\sqrt{\log N}+o(\sqrt{\log N})}.    
\end{equation}

Here we improve this bound as follows
\begin{theorem}
\label{lb:corner}
$$
\rho^{\angle}_3(N) \ge 2^{-2\sqrt{\log e}\sqrt{\log N}+o(\sqrt{\log N})}= 2^{-2.4022...\sqrt{\log N}+o(\sqrt{\log N})}.
$$
There is an explicit corner-free subset of $[N]\times[N]$ of size 
$$
N^2/2^{2\sqrt{\log e} \sqrt{\log N}+o(\sqrt{\log N})}.
$$
\end{theorem}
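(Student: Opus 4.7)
The plan is to prove Theorem~\ref{lb:corner} by reducing it, via the Chandra--Furst--Lipton style correspondence, to the construction of an improved NOF protocol for the three-player exactly-$N$ problem, and then building such a protocol.

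For the reduction, suppose that exactly-$N$ admits a deterministic three-player NOF protocol of complexity $c$. Each of its at most $2^c$ accepting leaves determines a cylinder intersection $C\subseteq[N]^3$ contained entirely in the accepting set $\{(x,y,z):x+y+z=N\}$, and the defining property of a cylinder intersection forces the projection of $C$ to the first two coordinates to be corner-free in $[N]\times[N]$. The accepting set has $\Theta(N^2)$ elements, so averaging yields a $C$ of size $\Omega(N^2/2^c)$, and hence an explicit corner-free set of the same size. It therefore suffices to design a protocol for exactly-$N$ of complexity $2\sqrt{\log e}\sqrt{\log N}+o(\sqrt{\log N})$.

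For the protocol I would sharpen the Behrend-style paradigm. Writing the inputs in a suitable base $b$ with $b^d\approx N$, the task reduces to checking an equation of digit-vectors in $\mathbb{Z}_b^d$ that holds without carries. Behrend's construction restricts to a single Euclidean sphere in $\mathbb{Z}_b^d$, whose strict convexity rules out three collinear digit-vectors, and optimizing $b$ and $d$ yields the constant $2\sqrt{2}$. My proposal is to replace the single sphere by a family of thin concentric shells, with each player broadcasting a short label, essentially a discretized squared norm, describing the shell of each digit-vector she sees. The triples surviving the transcript then live in a union of shells whose combined mass is considerably larger than any one sphere, accounting for the improvement of $\sqrt{2}$ to $\sqrt{\log e}=1/\sqrt{\ln 2}$; this constant reflects the Gaussian-type entropy of digit-vectors under a suitable norm, in contrast to the single dominant sphere exploited by Behrend.

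The main obstacle is the correctness of such a protocol, which amounts to a rigidity statement: three digit-vectors whose squared norms lie in the prescribed shells and whose carry-less sum equals the digit-vector of $N$ must actually sum without carries in $\mathbb{Z}$ and be essentially determined by the shell labels. For a single sphere this is elementary, but for a union of shells it requires careful calibration of the shell widths against $b$ and $d$ so that spurious carried solutions do not arise, while the shell labels remain of length $o(\sqrt{\log N})$. Once this rigidity is verified, the communication budget evaluates to $2\sqrt{\log e}\sqrt{\log N}+o(\sqrt{\log N})$, and the reduction of the first step converts the densest accepting leaf of the protocol into the explicit corner-free set promised by Theorem~\ref{lb:corner}.
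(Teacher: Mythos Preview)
Your reduction from a protocol of complexity $c$ to a corner-free set of density $\Omega(2^{-c})$ is correct and is exactly the Chandra--Furst--Lipton correspondence the paper invokes (Claim~\ref{claim_CFL_1}). The gap is in the protocol itself. You propose to coarsen the single Behrend sphere into a family of thin shells and have players announce shell labels, but you never define the protocol precisely, and you explicitly flag the needed ``rigidity statement'' as an unresolved obstacle. More fundamentally, this is aimed at the wrong term: in the existing protocols the squared-norm announcement already costs only $\log(dq^2)=O(\log\log N)$ bits, which is lower order. The dominant $d$-bit cost comes from $P_z$ publishing the carry vector $C(x,y)$, and nothing in a sphere-versus-shells modification reduces that. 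Your heuristic that $\sqrt{\log e}$ arises from ``Gaussian-type entropy of digit-vectors under a suitable norm'' does not correspond to any mechanism you supply.

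The paper's argument is quite different and does not coarsen the sphere at all; it still announces an \emph{exact} squared norm via Protocol~\ref{base_protocol}. The saving is on the carry side. One rewrites the relation as $x_q+\eta=z_q$ with $\eta=(x+y)_q-x_q$, so that only $P_x$ (not $P_y$) needs to learn $C(x,y)$; and $P_x$ already shares $y$ with $P_z$. Conditioned on $y$, carry bit $i$ is essentially Bernoulli with parameter $y_i/q$, so for typical $y$ the information $P_z$ must send is about $\sum_i h(y_i/q)\approx d\int_0^1 h(u)\,du = d\cdot\tfrac{\log e}{2}$ bits instead of $d$. Optimizing $d$ and $q$ then yields $2\sqrt{\log e}\sqrt{\log N}$; the constant is the integral of the binary entropy function, not a Gaussian concentration effect. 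A short covering argument (Proposition~\ref{smear}) upgrades the ``typical $y$'' protocol to one valid for all inputs at cost $O(\log\log N)$. None of this appears in your proposal.
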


\subsection{The Computational Perspective}

The multiplayer Number On the Forehead (NOF) model of communication complexity
was introduced by Chandra, Furst and Lipton \cite{CFL83}. Given a function $f: [N]^{k}\to \{0,1\}$,
the $k$ players in this scenario should jointly find out  $f(x_1,\ldots,x_k)$. We think of
$x_i$ as being placed on player $i$'s forehead, so that each player sees the whole input bar one argument. 
Players communicate by writing bits on a shared blackboard according
to an agreed-upon protocol. This model is intimately connected to several
key problems in complexity theory. E.g., lower bounds on the size of $ACC^0$ 
circuits for a natural function in $P$ \cite{Yao90, HG91}, branching programs,
time-space tradeoffs for Turing machines \cite{KN97}, and proof complexity \cite{BPS06}. 
In addition, progress in the NOF model, even for specific problems and for $k=3$, would have profound implications 
in graph theory and combinatorics \cite{hdp17, alon2020number}.

Much of Chundra, Furst and Lipton's seminal paper \cite{CFL83} is dedicated to the exactly-$N$ function
$f: [N]^{k}\to \{0,1\}$, where $f(x_1,\ldots,x_k)=1$ iff $\sum x_i=N$. They discovered a connection between the communication 
complexity of this function and well-known problems in additive combinatorics and Ramsey theory. They
used Ramsey's theory to prove a (rather weak) lower 
bound on the NOF communication complexity of this function. Using the connection to additive number theory,
they showed that a $O(\sqrt{\log N})$ protocol exists, although they have not made this protocol explicit. 

Our main concern here is with the $3$-player NOF
exactly-$N$ problem (or, the essentially equivalent addition problem where the players need to decide whether $x+y=z$). 
The three players jointly design a communication protocol $\mathcal{CP}$.
Then they get separated, and are given access to inputs $x,y,z$, as described above. Namely,
player $P_x$ gets to see inputs $y$ and $z$, $P_y$ sees $x$ and $z$, and $P_z$
sees $x$ and $y$. According to
their chosen $\mathcal{CP}$, they take turns writing messages
on a blackboard that is visible to all three players. The game ends when every player can deduce
whether $x+y+z=N$ (resp.\ $x+y=z$) based on the inputs she sees, and the contents of the blackboard (called {\em transcript}). 
The {\em complexity} of $\mathcal{CP}$ is the maximal length of a transcript over all instances $x,y,z$. 
As a function of $N$, the {\em communication complexity} of the problem is the minimal complexity of a protocol 
$\mathcal{CP}$ that solves the problem correctly for all inputs.
A {\em one-round} protocol starts with $P_z$ who writes a message on the board.
Subsequently $P_x$ and $P_y$ each write a single verification bit.
Let us spell out the connection between the communication complexity of the $3$-players NOF
addition problem and the corners theorems: 
\begin{claim}[\cite{CFL83}, implicit]
\label{claim_CFL_1}
\begin{enumerate}
\item 
There is an optimal one-round protocol for exactly-$N$.
\item
Let $T=\trans(x,y)$ be the message that $P_z$ posts on inputs $(x,y)$ in a one-round protocol 
for exactly-$N$. Then the set
\[S(T) = \{(x,y): \trans(x,y) = T\}\]
is corner-free.
\end{enumerate}
\end{claim}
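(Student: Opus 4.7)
The plan is to prove both parts together via the tight correspondence between one-round protocols for exactly-$N$ of complexity $c$ and partitions of (the admissible portion of) $[N]\times[N]$ into $2^c$ corner-free sets. Part (2) is the ``easy direction'' of this correspondence, and part (1) follows by applying the ``hard direction'' to an optimal multi-round protocol.

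For part (2), I argue by contradiction. Suppose $(a,b)$, $(a+\delta,b)$, $(a,b+\delta)$ all lie in $S(T)$ with $\delta\ne 0$, and set $z^*=N-a-b-\delta$. Consider the inputs $I_1=(a,b,z^*)$, $I_2=(a+\delta,b,z^*)$, $I_3=(a,b+\delta,z^*)$, whose coordinate sums are $N-\delta$, $N$, $N$, so their correct answers are NO, YES, YES respectively. Since all three $(x,y)$-projections lie in $S(T)$, $P_z$ posts the common message $T$ on each. Moreover $P_x$ has the identical view $(b,z^*)$ in $I_1$ and $I_2$, and $P_y$ has the identical view $(a,z^*)$ in $I_1$ and $I_3$, so $P_x$'s verification bit agrees on $I_1, I_2$ and $P_y$'s agrees on $I_1, I_3$. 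Writing the two bits in $I_1$ as $b_x,b_y$, the natural ``AND''-convention for the output (each of $P_x, P_y$ must certify the answer) then forces $b_x=1$ from the YES answer of $I_2$ and $b_y=1$ from the YES answer of $I_3$, whereupon the transcript of $I_1$ is $(T,1,1)$, producing YES instead of the correct NO. Hence $S(T)$ is corner-free.

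For part (1), I would take an arbitrary protocol of complexity $c$ and extract a one-round protocol of complexity $c+O(1)$. Each transcript $\tau$ labels a cylinder intersection $C_\tau$ of inputs, and for a YES transcript $C_\tau\subseteq\{x+y+z=N\}$. The key step is to verify that the projection $\pi(C_\tau)=\{(x,y):(x,y,N-x-y)\in C_\tau\}$ is corner-free, via the same three-point argument at the level of cylinders: if $(a,b),(a+\delta,b),(a,b+\delta)\in\pi(C_\tau)$ with $\delta\ne 0$, combining the three defining cylinder conditions ($A_\tau$ depending on $(y,z)$, $B_\tau$ on $(x,z)$, and the third on $(x,y)$) forces $(a,b,N-a-b-\delta)\in C_\tau$, although its coordinate sum is $N-\delta\ne N$. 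The projections $\pi(C_\tau)$ over YES transcripts thus partition the admissible $(x,y)$ into at most $2^c$ corner-free pieces; conversely, any such partition encodes a one-round protocol in which $P_z$ sends the label of the piece, and $P_x, P_y$ check respectively whether $(N-y-z,y)$ and $(x,N-x-z)$ lie in the same piece (outputting the AND of the two bits).

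The main subtlety is the convention for combining the two verification bits; the argument is immediate under the AND convention, and any one-round protocol can be renormalized to this form at a constant additive cost in complexity. Beyond this, the entire proof reduces to the observation that in the three chosen inputs $I_1, I_2, I_3$ the players' views are pairwise shared in exactly the right way to trap the protocol into a contradiction.
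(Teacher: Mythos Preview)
The paper does not actually prove this claim; it states it as implicit in \cite{CFL83} and refers the reader to \cite{CFL83, beigel2006multiparty, ada2015nof, hdp17, alon2020number} for details. Your proposal is therefore not being compared against an in-paper proof but stands on its own, and it is the standard argument.

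Your proof of part (2) is correct. The three inputs $I_1,I_2,I_3$ are chosen exactly so that $P_z$'s view is constant on $S(T)$, $P_x$'s view agrees on $I_1,I_2$, and $P_y$'s view agrees on $I_1,I_3$; the contradiction then drops out. You are right to flag the output convention: as you note, the paper's phrase ``verification bit'' already signals the AND convention, and in any case part (1) shows this form is optimal, so the caveat is harmless.

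Your proof of part (1) is also correct and is precisely the cylinder-intersection argument one expects. The verification that $\pi(C_\tau)$ is corner-free is clean: from $(a,b),(a+\delta,b),(a,b+\delta)\in\pi(C_\tau)$ you extract $(b,N{-}a{-}b{-}\delta)\in A_\tau$ from the second point, $(a,N{-}a{-}b{-}\delta)\in B_\tau$ from the third, and $(a,b)\in D_\tau$ from the first, forcing $(a,b,N{-}a{-}b{-}\delta)\in C_\tau$, which contradicts $\tau$ being a YES transcript. One small remark: your construction yields a one-round protocol of cost at most $c+2$ rather than exactly $c$, so ``optimal'' here is up to an additive constant. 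This is how the claim is universally understood in the literature the paper cites, and the $+2$ is irrelevant for any application in the paper.
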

See \cite{CFL83, beigel2006multiparty, ada2015nof, hdp17, alon2020number} for more details about the above claim and the relation
between communication complexity and additive combinatorics. 

There are several reasons why it is highly significant to
determine the communication complexity of the exactly-$N$ function,
aside of the very fundamental nature of the problem:
\begin{itemize}

\item Our poor understanding of this question is
manifested by the huge gap between the upper and lower
bounds that we currently have on the communication complexity of this problem. This
gap is double exponential for three players, and even worse for $k>3$
players.

\item
Despite the significance of the NOF model, we still know very little
about it. The rich web of mathematical and computational concepts surrounding 
the exactly-$N$ function suggests that it may open the gate to 
progress in understanding numerous other NOF functions.

\item The $k$-player exactly-$N$ function is a {\em graph function} \cite{BDPW07}. For most functions in this class
the deterministic and randomized communication complexity differ substantially,
but no explicit function with deterministic complexity larger than polylogarithmic is presently known.   

\item This problem is {\em equivalent} to corner theorems in additive combinatorics (e.g., \cite{ajtai1974sets}),
and is closely related to other important problems such as constructing \rs\ graphs
and the triangle removal lemma \cite{hdp17, alon2020number}. 

\end{itemize} 

As mentioned, the existence of a protocol for the exactly-$N$ problem has already been known since \cite{CFL83}.
However, this was just an existential statement and no actual protocol was provided. This lacuna
was recently remedied with two protocols \cite{hdp17, alon2020number} of the exact same complexity
as the one whose existence was proven in \cite{CFL83}
\begin{equation}\label{old_ub}
2\sqrt{2}\sqrt{\log N}+o(\sqrt{\log N})= 2.828...\sqrt{\log N}+o(\sqrt{\log N}).
\end{equation}

Here we give the first improved protocol for the exactly-$N$ problem,
and prove
\begin{theorem}
\label{main_p}
There is an explicit protocol for $3$-players exactly-$N$ of complexity 
\begin{equation}\label{better_upper_bound}
2\sqrt{\log e} \sqrt{\log N}+o(\sqrt{\log N}) = 2.4022...\sqrt{\log N}+o(\sqrt{\log N}).
\end{equation}

\end{theorem}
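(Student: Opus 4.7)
My plan is to prove Theorem~\ref{main_p} by establishing Theorem~\ref{lb:corner} (an explicit large corner-free set) and then invoking the correspondence in Claim~\ref{claim_CFL_1}. Concretely, given a corner-free set $S\subseteq[N]^2$ of density $\alpha$, translation-invariance of the corner structure together with a random-translate covering argument yields a partition of $[N]^2$ into $O(\alpha^{-1}\log N)$ corner-free classes. Player $P_z$ broadcasts which class contains $(x,y)$; by Claim~\ref{claim_CFL_1} the corner-free property guarantees that $P_x$ and $P_y$ can each finish the protocol with a single verification bit. The total complexity is $\log(1/\alpha)+O(\log\log N)$, which matches $2\sqrt{\log e}\,\sqrt{\log N}+o(\sqrt{\log N})$ precisely when $\alpha$ attains the density in Theorem~\ref{lb:corner}.

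For the corner-free construction itself I adapt the Behrend framework to two dimensions. Write $N=M^k$, let $\vec a(x)\in\{0,\ldots,M-1\}^k$ denote the base-$M$ digit vector of $x$, and fix a digit cap $d$ with $M\ge 2d+1$; this lower bound on $M$ ensures that whenever $\vec a(x),\vec a(x+\delta)\in\{0,\ldots,d\}^k$ the digits of $x+\delta$ are obtained by coordinate-wise addition without carries, so that $\vec a(x+\delta)=\vec a(x)+\vec a(\delta)$. For each integer $q$ I set
\[
  S_q \;=\; \bigl\{\,(x,y)\in[N]^2\,:\ \vec a(x),\vec a(y)\in\{0,\ldots,d\}^k,\ \|\vec a(x)-\vec a(y)\|^2=q\,\bigr\}.
\]
A corner $(x,y),(x+\delta,y),(x,y+\delta)\subseteq S_q$ gives, with $\vec w=\vec a(x)-\vec a(y)$ and $\vec u=\vec a(\delta)$, the three equations $\|\vec w\pm\vec u\|^2=\|\vec w\|^2$, which summed force $2\|\vec u\|^2=0$. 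Hence $\vec u=\vec 0$ and $\delta=0$, so every $S_q$ is corner-free.

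The quantitative optimization is the crux. Among the $(d+1)^{2k}$ pairs with nice digits, a local central limit theorem for the sum of independent terms $(a_i(x)-a_i(y))^2$ concentrates $\|\vec a(x)-\vec a(y)\|^2$ on an interval of length $O(d^2\sqrt{k})$, so the best $q^\star$ yields $|S_{q^\star}|\ge\Omega\bigl((d+1)^{2k}/(d^2\sqrt{k})\bigr)$. The resulting density in $[N]^2$ is $((d+1)/(2d+1))^{2k}/(d^2\sqrt{k})$, whose base-two logarithm reads, up to lower-order terms, $-\log N/u-2u$ with $u=\log d$ and $k=\log N/\log(2d+1)$; optimizing in $u$ recovers only the Behrend constant $2\sqrt 2$. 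To reach $2\sqrt{\log e}$ the construction must be refined so that the per-$\log d$ penalty drops from $2$ to $\log e$, turning the optimization into $\min_u\bigl(\log N/u+u\log e\bigr)=2\sqrt{\log e\cdot\log N}$ attained at $u=\sqrt{\log N/\log e}$.

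The main obstacle is precisely this reduction of the per-dimension penalty. It cannot come from any purely one-dimensional Behrend input, since the straightforward reduction $\{(x,y):y-x\in A\}$ with $A$ a $1$D Behrend set transmits exactly the constant $2\sqrt 2$ from $A$ to $S$. I expect the improvement to arise from a genuinely two-dimensional modification of the scheme above -- for instance, by tuning the digit ranges of $x$ and $y$ asymmetrically, or by intertwining their digits before applying the quadratic constraint -- so that the carry-avoidance cost and the sphere-concentration cost are amortized against a single factor of $\log d$ rather than paid separately. Isolating this two-dimensional saving and verifying that the corner-free property survives the modification is the \emph{technical heart} of the proof.
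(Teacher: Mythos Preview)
Your reduction in the first paragraph is fine and is essentially the converse of the paper's Proposition~\ref{smear}: a corner-free set of density $\alpha$ gives, via a greedy/random-translate cover, a one-round protocol of cost $\log(1/\alpha)+O(\log\log N)$. So proving Theorem~\ref{lb:corner} first and then deducing Theorem~\ref{main_p} is a legitimate route.

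The gap is that you have not proved Theorem~\ref{lb:corner}. Your digit construction $S_q$ is corner-free, but your own optimization shows it only attains density $2^{-2\sqrt{2}\sqrt{\log N}+o(\sqrt{\log N})}$, i.e.\ the old bound~(\ref{old_bound}). You then say that dropping the per-dimension penalty from $2$ to $\log e$ ``is the technical heart of the proof'' and speculate that it should come from asymmetric digit ranges or intertwined digits --- but you do not supply any such modification, nor any argument that one exists with the corner-free property intact. As written, the proposal is a plan that stops exactly at the point where the new idea is needed.

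For comparison, the paper does \emph{not} first build a corner-free set. It works directly in the protocol world and then reads off Theorem~\ref{lb:corner} via Claim~\ref{claim_CFL_1}. The new idea is this: in the base-$q$ protocol, rather than have $P_z$ publish the full carry vector $C(x,y)\in\{0,1\}^d$ (cost $d$ bits, leading to $2\sqrt{2}$), $P_z$ only needs to convey $C(x,y)$ to $P_x$, and both of them already know $y$. Hence the cost is the conditional entropy $H(C\mid y)$ for random $x,y$, which equals $(1+o(1))\,d\int_0^1 h(u)\,du=(1+o(1))\,d\cdot\tfrac{\log e}{2}$. Plugging $\lambda=\tfrac{\log e}{2}$ into $\lambda d+2\log q$ and optimizing gives $2\sqrt{2\lambda\log N}=2\sqrt{\log e}\,\sqrt{\log N}$. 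The constant $\log e$ thus arises from $\int_0^1 h(u)\,du$, an information-theoretic quantity tied to carries, not from any sphere-counting refinement; your suggested directions (asymmetric digit caps, intertwined coordinates) do not point toward this mechanism.
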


\section{Proof of Theorem \ref{main_p}}

We recall that the three players called $P_x, P_y$ and $P_z$ get to see the inputs $(y,z), (x,z)$ and $(x,y)$ respectively.
Given integers $q,d > 1$, define $g=g_{q, d}(\alpha, \beta, \gamma)$ to be $1$ if $\alpha+\beta = \gamma$ and $0$
otherwise. Here $\alpha, \beta\in [q]^d$, $\gamma\in [2q]^d$ and addition is
vector addition in $\mathbb{R}^d$. The following one-round protocol \cite{alon2020number} for $g$ is 
correct because the inequality
$\|2\alpha-\gamma\|^2+\|2\beta-\gamma\|^2\ge 2\|\alpha-\beta\|^2$ holds always
and is an equality iff $\gamma=\alpha+\beta$.

\bigskip
\fbox{
\begin{minipage}{5.8in}
\begin{protocol}{\sf{A protocol for $g_{q, d}$}}
\label{base_protocol}
\begin{enumerate}

\item $P_z$ computes $\|\alpha-\beta\|_2^2$, and writes the result on the board.

\item $P_y$ writes $1$ iff $\|\alpha-\beta\|_2^2 = \|2\alpha-\gamma\|_2^2$.

\item $P_x$ writes $1$ iff $\|\alpha-\beta\|_2^2 = \|2\beta-\gamma\|_2^2$.

\end{enumerate}

\end{protocol}
\end{minipage}
}
\bigskip

The cost of this protocol is $2+\log dq^2$.

The above is an efficient method to decide high-dimensional vector addition, but our objective is to decide the
integer addition relation $X+Y+Z=N$. We let $x=X, y=Y$ and $z=N-Z$, so the relation
we need to consider is $x+y=z$. 

Our protocol to decide whether $x+y=z$ builds on the protocol for $g_{q, d}$. It is the issue of carry in integer
addition that makes this decision problem harder.
The integers $q,d > 1$ are chosen so that
\begin{equation}\label{qnd}
 2qN > q^d \ge 2N.   
\end{equation}
the specific choice is made below so as to minimize the cost of the protocol.\\ 
We denote by $w_q$ the vector that corresponds to
the base $q$ representation of the integer $w$. 

As usual, $e_i$ is the $d$-dimensional vector with $1$ in the $i$-th coordinate and zeros elsewhere. 
Let $C(x,y) \in \{0,1\}^d$ be the carry vector when $x$ and $y$ are added in base $q$.
The relation $x+y=z$ among integers is equivalent to the vector relation \[x_q+y_q=\zeta,\] where
the $i$-th coordinate of $\zeta$ is
\[\zeta_i=z_i+q\cdot C(x,y)_i-C(x,y)_{i-1}\]
(Here $C(x,y)_0=0$).
The protocol from \cite{alon2020number} now suggests itself: $P_z$ posts $C(x,y)$,
and Protocol \ref{base_protocol} is used to decide the relation $x_q+y_q=\zeta$. 
This yields again the estimate (\ref{old_ub}).

The alternative approach that we adopt here considers instead the equivalent vector relation
\[x_q+\eta=z_q\]
where
\[\eta=(x+y)_q-x_q.\]
Concretely, the $i$-th coordinate of $\eta$ is:
\[\eta_i=y_i-q\cdot C(x,y)_i+C(x,y)_{i-1}.\]
In order to run Protocol \ref{base_protocol}, $P_z$ needs to know $\eta$ and $x_q$, and he does. 
With $P_y$ it's even simpler, since he needs to know $x_q$ and $z_q$ which are his inputs. The
only difficulty is with $P_x$ who needs to know $z_q$ (which he does) and $\eta$. The latter
is not part of his input and $P_z$ fills in the missing information for him.

The obvious solution is for $P_z$ to reveal $C(x,y)$ to $P_x$ using $d$ bits of information.
However, we can save communication by exploiting the fact
that $P_x$ and $P_z$ share some information, i.e., they both know $y$ for every $y\neq 0$.

By a standard argument in this area which we detail below (Proposition \ref{smear}), a protocol that works for {\it typical}
pairs $x,y$ can be easily modified to work in {\it all} cases. 
So, let us pick $x$ and $y$ uniformly at random from among the $d$-digit numbers in base $q$
and think of $C=C(x,y)$, the vector of carry bits as a random variable on this probability space.
The number of bits that $P_z$ needs to post so that $P_x$ gets to know $C$,
and therefore know $\eta$, is $H(C|y)$,
the entropy of $C$ given $y$. The gain is clear, since $H(C)>H(C|y)$.

It remains to estimate $H(C|y)$. Let $X$ be the random variable that is
a uniformly sampled subset of $[s]$ of cardinality $\ge t$, for some
integers $s\ge t\ge 0$. We recall that $H(X)=(1+o_s(1))\cdot s\cdot h(t/s)$,
where $h(\cdot)$ is the univariate entropy function, and the same holds also if we 
consider subsets of $[s]$ of cardinality $\le t$. 
Let $r$ be an integer in the range $d\gg r\gg 1$, 
e.g., $r\approx\sqrt{d}$. For $j=1,\dots,r$, let
\[
S_j= \{i~|~ \frac{qj}{r}>y_i\ge\frac{q(j-1)}{r}\},
\]
where $q>y_i\ge 0$ is the $i$-th digit of $y$. A carry occurs in digit $i\in S_j$
only if $x_i> \frac{q(r-j)}{r}$, where $x_i$ is the $i$-th digit of $x$. Then
\[
H(C|y)\le (1+o_r(1))\sum_{j=1}^r \frac{|S_j|}{d}h(\frac{j}{r}).
\]
Since $y$ is chosen at random, $|S_j|\le(1+o_r(1))\frac{d}{r}$, and so
\[
H(C|y)\le (1+o_r(1))\sum_{j=1}^r \frac{1}{r}h(\frac{j}{r}).
\]
The limit of this expression as $r\to\infty$ is
\[
\lambda=\int_0^1 h(u)du = \frac{\log e}{2}=0.721...
\]

It is left to optimize on $q$ and $d$. The complexity of our protocol is
\[
\lambda d+\log dq^2 + 2,
\]
where recall that $2qN > q^d \ge 2N$. It is not hard to verify that choosing
\begin{equation}\label{d_and_q}
d=\sqrt{\frac{2}{\lambda} \log 2N}~~~~~
q=2^{\sqrt{\frac{\lambda}{2}\log 2N}},
\end{equation}
we get a protocol with complexity
\[
2\sqrt{2\lambda \log N} +o(\sqrt{\log N}),
\]
and this is asymptotically optimal in our setting.

To sum up, here is the protocol which proves Theorem \ref{main_p}:

\bigskip
\fbox{
\begin{minipage}{5.8in}
\begin{protocol}{\sf{A protocol for exactly-$N$, for typical
pairs $x,y$}}\label{full_prtcl}
\begin{enumerate}

For $d,q$ as in Equation (\ref{d_and_q})

\item $P_z$ publishes the vector $\eta = (x+y)_q-x_q$ in a way that $P_x$ can read.

\item The players run Protocol~\ref{base_protocol} for $g_{q,d}$ on inputs
$x_q, \eta, z_q$. That is:

\begin{enumerate}

\item $P_z$ writes $\|\eta-x_q\|_2^2$ on the board

\item $P_y$ writes $1$ iff $\|\eta-x_q\|_2^2 = \|2x_q-z_q\|_2^2$.

\item $P_x$ writes $1$ iff $\|\eta-x_q\|_2^2 = \|2\eta-z_q\|_2^2$.

\end{enumerate}

\end{enumerate}

\end{protocol}
\end{minipage}
}
\bigskip

\begin{proposition}\label{smear}
Let $\cal P$ be an NOF protocol for the exactly-$N$ that works correctly
for an $\Omega(1)$-fraction of the input pairs $x,y$ (and every $z$) with communication
complexity $\Phi(N)$. Then there is an NOF protocol that works for all
inputs with communication complexity $\Phi(N) + O(\log\log N)$.
\end{proposition}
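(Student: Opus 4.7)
The plan is a standard random-shift reduction. First, I would embed the problem in a cyclic group $\mathbb{Z}_M$ with $M=\Theta(N)$ large enough that the equation $x+y+z=N$ on $[N]^3$ coincides with $x+y+z\equiv N\pmod M$. For any shift $(a,b)\in \mathbb{Z}_M^2$, the map $(x,y,z)\mapsto (x+a,y+b,z-a-b)\pmod M$ is then a bijection of the instance space that preserves the target condition, and each player can locally apply it to her visible coordinates once $(a,b)$ has been agreed upon.

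Next, let $G\subseteq \mathbb{Z}_M^2$ denote the set of pairs $(x,y)$ on which $\mathcal P$ is correct for every $z$; the hypothesis gives $|G|\ge \epsilon M^2$ for some absolute constant $\epsilon>0$. For any fixed $(x,y)$, a uniformly random shift $(a,b)$ moves it into $G$ with probability $\epsilon$, so by independent sampling and a union bound over the $M^2$ possible pairs, there will exist a fixed collection $\mathcal S=\{(a_1,b_1),\dots,(a_t,b_t)\}$ with $t=O(\log N)$ such that \emph{every} pair $(x,y)\in\mathbb{Z}_M^2$ is moved into $G$ by at least one shift in $\mathcal S$. This collection is hard-coded into the protocol description, available to all three players before they see their inputs.

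The augmented protocol would then run as follows. Player $P_z$, who sees $(x,y)$, locally computes the smallest $i$ with $(x+a_i,y+b_i)\in G$ and writes $i$ on the blackboard using $\lceil\log t\rceil=O(\log\log N)$ bits. Each player now has enough information to form her coordinates of the shifted triple --- $P_x$ computes $y+b_i$ and $z-a_i-b_i$, and similarly for $P_y$ and $P_z$ --- after which they execute $\mathcal P$ on this triple at cost $\Phi(N)$. Correctness is immediate because the shift preserves the exactly-$N$ relation and the shifted pair lies in $G$. I expect the only genuine care-point to be the initial embedding: one must verify that $\mathcal P$, originally designed for inputs in $[N]$, extends consistently to shifted inputs in the slightly enlarged cyclic range without changing its asymptotic complexity. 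This is routine but must be checked, since the probabilistic argument only works if we can union-bound over an $O(N^2)$-sized pair space and if the correctness guarantee on the shifted pair really certifies the answer for the original instance.
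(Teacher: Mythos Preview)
Your approach is essentially the same as the paper's: find a family of $O(\log N)$ shifts that moves every pair into the good set $G$, have $P_z$ announce the shift index using $O(\log\log N)$ bits, then run $\mathcal P$ on the shifted instance. Two minor technical differences are worth noting. First, the paper avoids the cyclic-group embedding by working with integer shifts $\Delta\in[-N,N]^2$ directly over $[N]\times[N]$; this sidesteps your care-point about extending $\mathcal P$ to $\mathbb{Z}_M$, although the third coordinate still leaves $[N]$ after shifting, so a tacit domain extension is needed either way. Second, to obtain the $O(\log N)$-size covering family, the paper invokes Lov\'asz's bound $c(\mathcal X)\le c^{\ast}(\mathcal X)\cdot\log|\Omega|$ for set covers (exhibiting an $O(1)$ fractional cover via uniform weights) rather than your random-sampling-plus-union-bound argument; the two techniques are interchangeable here and give the same bound.
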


\begin{proof}
Let $S \subseteq [N]\times [N]$ be the set of input pairs $x,y$ on which $\cal P$ succeeds.
We claim that there is a collection $F$ of $O(\log N)$ vectors $\Delta\in [N]\times [N]$
such that 
\[\cup_{\Delta\in F} (S+\Delta) \supseteq [N]\times [N].\]
In the modified protocol, $P_z$ sees $x, y$ and announces the index of some $\Delta=(\Delta_1,\Delta_2)\in F$
for which $(x-\Delta_1, y-\Delta_2)\in S$. Then the players run Protocol \ref{full_prtcl}
with inputs $(x-\Delta_1, y-\Delta_2, z-\Delta_1-\Delta_2)$. 

The construction of $F$ uses a standard fact about the set-cover problem. For
a family of finite sets ${\cal X}\subseteq 2^{\Omega}$ we denote by $c(\cal X)$ the least number
of members in $\cal X$ whose union is $\Omega$. Also $c^{\ast}(\cal X)$ is the
minimum cost of a fractional cover. Namely,
\[
c^{\ast}(\cal X)\rm=\min \sum_{{\cal X}}\omega_X,
\text{~where~} \omega_X\ge 0 \text{~for every~} X\in{\cal X} \text{~and~} \sum_{x\in X}\omega_X\ge 1 \text{~for every~} x\in\Omega.
\]
Then \[c({\cal X})\le \log(|\Omega|)\cdot c^*(\cal X)\] (e.g., Lov\'asz \cite{Lov75})
and actually the greedy algorithm yields a set cover that meets this bound. 

In our case
$\Omega=[N]\times[N]$, and \[{\cal X}=\{(S+\Delta)\cap([N]\times[N])~|~\Delta\in [-N,N]\times [-N,N]\}.\]
It is easily verified that the weights $\omega_x=\frac{10}{N^2}$ constitute a fractional cover, so that $c^*({\cal X})\le 40$
and hence $c({\cal X})\le 80\log N$, as claimed.
\end{proof}

\section{Discussion}

The strong relation between the exactly-$N$ problem in the NOF model and questions in additive combinatorics has
been discovered decades ago, in the seminal paper of Chundra, Furst and Lipton \cite{CFL83}. 
However, this subject remains under-developed.
We believe that there is a lot to be done here, and many interesting avenues of research that this
study can take. One obvious candidate is to seek further improvement is the addition problem. 
We conjecture:

\begin{conjecture}
The NOF communication complexity of exactly-$N$ is $o(\sqrt{\log N})$.
Possibly it is much smaller, even as small as $(\log\log N)^{O(1)}$.
\end{conjecture}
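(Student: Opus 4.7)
The plan is to push the entropy-compression technique of Section~2 to its limit and then combine it with recursion. The protocol achieves cost $\lambda d + \log(dq^2) + 2$ under the constraint $q^d \asymp N$, where $\lambda = \tfrac{1}{2}\log e$ arose as the mean value of $h(\cdot)$ on $[0,1]$. The first step is to ask whether $\lambda$ can be replaced by a strictly smaller constant, ideally an arbitrary $\epsilon>0$, or even by a function that tends to $0$ as $d\to\infty$, by exploiting additional shared information among the players.

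First I would try to use all three pairs of shared inputs in the NOF model, not merely the $(P_x,P_z)$ pair used in Protocol~\ref{full_prtcl}. Player $P_y$ knows $(x,z)$ and player $P_x$ knows $(y,z)$, so any quantity whose value depends only on $(x,y)$ can be broadcast by $P_z$ at entropic cost $H(\cdot\mid y)$; any quantity depending on $(x,z)$ can be broadcast by $P_y$ at cost $H(\cdot\mid x)$, and so on. By splitting the $d$ digit positions into three interleaved classes and routing the carry information for each class through a different player, the effective cost per digit becomes a doubly-conditional entropy, which a~priori can be pushed well below $\lambda$. The aim of this ingredient alone is to drive the leading constant of $\sqrt{\log N}$ arbitrarily close to zero, already establishing $o(\sqrt{\log N})$.

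To reach $(\log\log N)^{O(1)}$ the plan is to recurse. A single exactly-$N$ instance of size $N$ decomposes, via base-$q$ expansion, into $d$ exactly-$N$ sub-instances of size $\approx q$ glued by a carry chain. If each coordinate sub-instance can itself be solved by a recursive NOF protocol of polylogarithmic cost in its size, and if the carry chain can be communicated in $o(d)$ bits as suggested above, then taking $q$ slightly super-constant, $d\approx \log N/\log q$, and iterating the recursion $O(\log\log N)$ times reaches the target bound. The delicate point is that each recursive call must remain a genuine three-player NOF instance with a balanced distribution of knowledge; a naive recursion may degenerate to a two-player model where the shared-information savings collapse.

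The main obstacle is the tight coupling between NOF upper bounds and corner-free sets exposed in Claim~\ref{claim_CFL_1}: a one-round protocol of complexity $c$ for exactly-$N$ yields a corner-free subset of $[N]\times[N]$ of density $2^{-c}$, and no construction in additive combinatorics has ever crossed the Behrend-type barrier of $2^{-\Theta(\sqrt{\log N})}$. Hence any proof of the conjecture must either exhibit a multi-round protocol whose transcript partition is strictly richer than that of any one-round protocol (so that Claim~\ref{claim_CFL_1} cannot be inverted), or simultaneously produce a new corner-free set beating every known construction, which would itself be a breakthrough. I would therefore first test the proposed recursion on a toy setting such as $[q]^d$ with $q$ bounded, where the lower-bound machinery is better understood, to decide whether multi-round interaction genuinely helps before attempting the full construction. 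The possibility that a matching $\Omega(\sqrt{\log N})$ lower bound exists, ruling out the conjecture, remains entirely open and is the heart of the problem.
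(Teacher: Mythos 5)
There is nothing in the paper to compare your text against: the statement you were given is a \emph{conjecture}, explicitly left open in the Discussion, and the paper offers no proof of it. Your submission is accordingly a research plan rather than a proof, and each of its two quantitative claims assumes exactly what would have to be established. The first ingredient --- that ``routing the carry information for each class of digits through a different player'' drives the per-digit cost below $\lambda=\tfrac{1}{2}\log e$, indeed arbitrarily close to $0$ --- is unsubstantiated and runs into an immediate structural problem: the carry vector $C(x,y)$ is a function of the pair $(x,y)$, which only $P_z$ sees in full. Player $P_y$ sees $(x,z)$ and cannot in general compute any nontrivial information about the carries of $x+y$ without already knowing whether $x+y=z$, so there is no second player through whom carry data can be ``routed.'' Within the paper's scheme the quantity $P_x$ is missing really does have conditional entropy about $\lambda$ per digit for typical inputs, so no re-accounting of the same protocol yields a smaller leading constant; one would need a genuinely different way of encoding the discrepancy between $(x+y)_q$ and the digit vectors, and you do not exhibit one.

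The recursive step has the same character: the $d$ base-$q$ coordinates are not independent exactly-$N$ sub-instances, they are coupled precisely by the carry chain, and the hypothesis ``if the carry chain can be communicated in $o(d)$ bits'' is the entire difficulty, not a reduction of it. Moreover, the obstacle you correctly identify at the end is not circumvented anywhere in your plan: by the connection behind Claim \ref{claim_CFL_1}, \emph{any} correct protocol of complexity $c$ (multi-round interaction does not help, since the transcripts partition the relevant pairs $(x,y)$ into at most $2^{c}$ corner-free sets) yields a corner-free subset of $[N]\times[N]$ of density at least $2^{-c}$. Hence a proof of the conjecture would necessarily produce corner-free sets of density $2^{-o(\sqrt{\log N})}$, far beyond Behrend-type constructions, beyond Theorem \ref{lb:corner}, and beyond Green's subsequent improvement cited in the paper. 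Your closing observation that a matching $\Omega(\sqrt{\log N})$ lower bound may exist is apt, but it underlines that what you have written is a discussion of possible avenues (some of which, as noted, are blocked as stated), not a proof of the conjectured bounds $o(\sqrt{\log N})$ or $(\log\log N)^{O(1)}$.
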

In the realm of additive combinatorics these conjectures translate to:
\begin{conjecture}
$$
\rho^{\angle}_3(N) \ge 2^{-o(\sqrt{\log N})}.
$$
and possibly even
$$
\rho^{\angle}_3(N) \ge 2^{-(\log\log N)^{O(1)}}.
$$
\end{conjecture}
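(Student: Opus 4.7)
The plan is to route through the communication-complexity equivalence. By the second part of Claim~\ref{claim_CFL_1}, a one-round NOF protocol for exactly-$N$ of complexity $C(N)$ immediately produces, via pigeonholing over the at most $2^{C(N)}$ transcripts of $P_z$, a corner-free subset of $[N]\times[N]$ of size at least $N^2/2^{C(N)}$. Hence it suffices to construct a protocol of complexity $o(\sqrt{\log N})$ for the first bound, and of complexity $(\log\log N)^{O(1)}$ for the stronger one. This reduces the question to designing qualitatively better protocols and lets me work entirely in the NOF framework, where Protocol~\ref{full_prtcl} is the current best and its bottleneck is sharply localized.

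The first concrete attack is to strengthen Protocol~\ref{full_prtcl} at its bottleneck: the $\lambda d$ bits that $P_z$ spends transmitting the carry vector $C(x,y)$ to $P_x$, conditioned only on $y$. I would push this in three directions. First, condition on more shared information: $P_x$ also sees $z$, and since Proposition~\ref{smear} permits restricting to typical instances, one may hope to build a two-round protocol in which $P_x$ feeds back a short hash of $z$ so that $P_z$'s subsequent carry description has entropy further reduced by $H(C \mid y,\text{hash}(z))$. Second, replace the base-$q$ positional system by a representation in which carries are intrinsically sparser — for example, a mixed Chinese Remainder representation modulo a carefully chosen family of coprime moduli, where integer addition becomes coordinate-wise and carry-free at the price of a larger alphabet, and then re-optimize the moduli against the verification cost $\log dq^2$ using the same inner protocol for $g_{q,d}$. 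Third, iterate the Linial-Shraibman trick recursively: encode $C(x,y)$ itself as an integer in a smaller range and invoke exactly-$N$ on it, aiming for compounded entropy savings across $O(\log\log N)$ levels of recursion, which would already drive the complexity well below $\sqrt{\log N}$.

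For the much stronger $(\log\log N)^{O(1)}$ bound, genuinely new tools are needed, since the entropy-of-carries method is intrinsically limited to a saving linear in~$d$ and $d$ must be $\Omega(\sqrt{\log N})$ for positional encodings to fit inside $N$. Plausible candidates include: an algebraic construction of a corner-free set based on varieties in $\mathbb{F}_p^2$ with controlled intersections with affine lines; a polynomial-method attack on the indicator of exactly-$N$, inverted in the style of Croot-Lev-Pach / Ellenberg-Gijswijt to construct rather than bound; or NOF protocols exploiting locally decodable codes and matching-vector families, which have yielded surprising savings elsewhere in the NOF model and are natural vehicles for a poly-$\log\log$ transcript length.

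The main obstacle is that every known corner-free set of density better than $2^{-c\sqrt{\log N}}$ descends from a Behrend-type sphere in $\mathbb{F}_q^d$, and no construction has ever broken this barrier — not for corners, nor even for $3$-APs in $[N]$ since Roth's 1953 theorem. On the communication side no lower bound for exactly-$N$ above $\omega(1)$ is known, so there is no proven obstruction; the difficulty is purely constructive. I would therefore expect the carry-compression refinements above to yield at most a constant-factor gain in the leading coefficient, view a recursive version as the most plausible route to crossing into $o(\sqrt{\log N})$, and regard the $(\log\log N)^{O(1)}$ form of the conjecture as essentially contingent on a fundamentally new algebraic or coding-theoretic insight.
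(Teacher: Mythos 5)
The statement you are addressing is not a theorem of the paper but an open conjecture stated in its Discussion section; the paper offers no proof of it, and neither do you. Your opening reduction is sound and is exactly the paper's own bridge: by Claim~\ref{claim_CFL_1}, a one-round protocol of complexity $C(N)$ yields a corner-free set of density $2^{-C(N)}$, so the conjecture is equivalent to exhibiting exactly-$N$ protocols of complexity $o(\sqrt{\log N})$ (resp.\ $(\log\log N)^{O(1)}$). But from that point on your text is a research program, not an argument: none of the three proposed improvements is carried out, no protocol with complexity $o(\sqrt{\log N})$ is exhibited, and you yourself concede that the carry-compression refinements should give at most a constant-factor gain in the leading coefficient, which leaves the bound at $\Theta(\sqrt{\log N})$ and hence does not even touch the first, weaker form of the conjecture.

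Two of the concrete ideas also have identifiable weaknesses worth flagging. The Chinese-remainder suggestion does not remove the obstruction it targets: the correctness of Protocol~\ref{full_prtcl} rests on the Euclidean inequality $\|2\alpha-\gamma\|^2+\|2\beta-\gamma\|^2\ge 2\|\alpha-\beta\|^2$ with equality iff $\gamma=\alpha+\beta$, which is a statement over $\mathbb{Z}^d$, not modulo $m_i$; in a residue system the wraparound $\alpha_i+\beta_i-m_i$ plays precisely the role of the carry, so the entropy cost reappears in disguise. Similarly, conditioning the carry description on a hash of $z$ runs into the fact that, given $y$, the pair $(z,\,C(x,y))$ is determined by $x$, so any substantial entropy reduction requires $P_x$ to transmit information about $z$ that $P_z$ cannot infer, and you give no accounting showing the feedback costs less than it saves; moreover the corner-free set extraction in Claim~\ref{claim_CFL_1} is tied to the one-round format, so any multi-round savings must survive the collapse to one round. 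In short, what you have written is a plausible map of how one might attack the conjecture, together with an accurate assessment that present techniques fall short; it is not a proof, and the conjecture remains open both in the paper and after your proposal.
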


\paragraph{Remark}
Very shortly after seeing our arXiv posting, Ben Green was able to improve our upper bound on $\rho^{\angle}_3(N)$ even further 
\cite{green2021lower}. 
What is more, his argument makes no reference to communication complexity.
This begs the question: Is the communication complexity
perspective really useful ? With only 
limited evidence on our hands, we can only state our opinion and impression.
We believe that communication complexity can become a significant source of ideas 
for additive combinatorics. At least to us, this point of view has
revealed things that we found hard to notice otherwise.
We hope that convincing evidence for this belief will be found in the future.

\section*{Acknowledgments} 
We thank Noga Alon, Aya Bernstine and Alex Samorodnitsky for insightful discussions.

\bibliographystyle{amsplain}


\begin{dajauthors}
\begin{authorinfo}[nati]
  Nati Linial\\
  Hebrew University of Jerusalem\\
  Jerusalem, Israel\\
  nati\imageat{}cs\imagedot{}huji\imagedot{}ac\imagedot{}il\\
  \url{https://www.cs.huji.ac.il/~nati/}
\end{authorinfo}
\begin{authorinfo}[johan]
  Adi Shraibman\\
  The Academic College of Tel-Aviv-Yaffo\\
  Tel-Aviv, Israel\\
  adish\imageat{}mta\imagedot{}ac\imagedot{}il \\
  \url{https://www2.mta.ac.il/~adish/}
\end{authorinfo}
\end{dajauthors}

\end{document}